\documentclass[12pt,reqno]{amsart}
\usepackage{amsfonts}
\usepackage{amssymb,color}
\usepackage{amssymb}

\usepackage[colorlinks=true]{hyperref}
\hypersetup{urlcolor=blue, citecolor=blue}

\usepackage[margin=3cm, a4paper]{geometry}

\newtheorem{theo}{Theorem}[section]
\newtheorem{lemm}[theo]{Lemma}
\newtheorem{defi}[theo]{Definition}
\newtheorem{coro}[theo]{Corollary}

\numberwithin{equation}{section}

\newcommand{\bal}{\begin{align}}
\newcommand{\bbal}{\begin{align*}}
\newcommand{\beq}{\begin{equation}}
\newcommand{\eeq}{\end{equation}}
\newcommand{\bca}{\begin{cases}}
\newcommand{\eca}{\end{cases}}

\newcommand{\pa}{\partial}
\newcommand{\fr}{\frac}
\newcommand{\na}{\nabla}
\newcommand{\De}{\Delta}

\newcommand{\cd}{\cdot}
\newcommand{\ep}{\varepsilon}
\newcommand{\dd}{\mathrm{d}}

\newcommand{\R}{\mathbb{R}}

\newcommand{\D}{\mathrm{div}}
\newcommand{\bi}{\Big }
\newcommand{\Z}{\mathbb{Z}}

\begin{document}

\subjclass[2010]{35Q35}
\keywords{Higher dimensional Camassa-Holm equations, Non-uniform continuous dependence, Besov spaces}

\title[Higher Dimensional Camassa-Holm Equations]{Non-uniform dependence for higher dimensional Camassa-Holm equations in Besov spaces}

\author[J. Li]{Jinlu Li}
\address{School of Mathematics and Computer Sciences, Gannan Normal University, Ganzhou 341000, China}
\email{lijinlu@gnnu.edu.cn}

\author[w. Deng]{Wei Deng}
\address{School of Mathematics and Computer Sciences, Gannan Normal University, Ganzhou 341000, China}
\email{dwhh0406@163.com}

\author[M. Li]{Min Li}
\address{School of intormation Management , Jiangxi University Of Finance and Economics 330032, China}
\email{limin@jxufe.edu.cn}

\begin{abstract}
In this paper, we investigate the dependence on initial data of solutions to higher dimensional Camassa-Holm equations. We show that the data-to-solution map is not uniformly continuous dependence in Besov spaces $B^s_{p,r}(\R^d),s>\max\{1+\frac d2,\frac32\}$.
\end{abstract}

\maketitle

\section{Introduction and main result}

In the paper, we consider the following Cauchy problem of higher dimensional Camassa-Holm equations:
\begin{equation}\label{E-P}
\begin{cases}
\partial_tm+u\cdot \nabla m+\nabla u^Tm+(\mathrm{div} u)m=0, \qquad (t,x)\in \R^+\times \R^d,\\
m=(1-\De)u,\qquad (t,x)\in \R^+\times \R^d,\\
u(0,x)=u_0,\qquad x\in \R^d,
\end{cases}
\end{equation}
where $u=(u_1,u_2,\cdots,u_d)$ denotes the velocity of the fluid, $m=(m_1,m_2,\cdots,m_d)$ represents the momentum, or we can write \eqref{E-P} in components,
$$\partial_tm_i+\sum^d_{j=1}u_j\partial_{x_j}m_i+\sum^d_{j=1}(\partial_{x_i}u_j)m_j+m_i\sum^d_{j=1}\partial_{x_j}u_j=0,
~~~~~~~~i=1,2,\cdots,d.$$

While \eqref{E-P} is also called the Euler-Poincar\'{e} equations in the higher dimensional case $d\geq1$. In particular, when $d = 1,$ the system \eqref{E-P} is the classical Camassa-Holm (CH) equation, like the KdV equation, the CH equation  describes the unidirectional propagation of waves at the free surface of shallow water under the influence of gravity \cite{Camassa,Camassa.Hyman,Constantin.Lannes}.  It is completely integrable \cite{Camassa,Constantin-P}, has a bi-Hamiltonian structure \cite{Constantin-E,Fokas}, and admits exact peaked solitons of the form $ce^{-|x-ct|}$, $c>0$, which are orbitally stable \cite{Constantin.Strauss}. It is worth mentioning that the peaked solitons present the characteristic for the travelling water waves of greatest height and largest amplitude and arise as solutions to the free-boundary problem for incompressible Euler equations over a flat bed, cf. \cite{Constantin2,Constantin.Escher4,Constantin.Escher5,Toland}. The local well-posedness and ill-posedness for the Cauchy problem of the CH equation in Sobolev spaces and Besov spaces was discussed in \cite{Constantin.Escher,Constantin.Escher2,d1,d3,Guo-Yin,Li-Yin,Ro}. The non-uniform dependence on initial data for the CH equation was studied in \cite{H-K,H-K-M,Li-Yu-Zhu}. It was shown that there exist global strong solutions to the CH equation \cite{Constantin,Constantin.Escher,Constantin.Escher2} and finite time blow-up strong solutions to the CH equation \cite{Constantin,Constantin.Escher,Constantin.Escher2,Constantin.Escher3}. The existence and uniqueness of global weak solutions to the CH equation were proved in \cite{Constantin.Molinet, Xin.Z.P}. The global conservative and dissipative solutions of CH equation were discussed in \cite{Bressan.Constantin,Bressan.Constantin2}.

For higher dimensional Camassa-Holm equations (hd-CH), or the so-called Euler-Poincar\'{e} system \eqref{E-P} were first studied by Holm, Marsden, and Ratiu in 1998 as a framework for modeling and analyzing fluid dynamics \cite{H-M-R1,H-M-R2}, particularly for nonlinear shallow water waves, geophysical fluids and turbulence modeling. Later, hd-CH equations have many further interpretations beyond fluid applications. For instance, in 2-D, it is exactly the same as the averaged template matching equation for computer vision \cite{H-M-A}. Also, hd-CH equations have important applications in computational anatomy, it can be regarded as an evolutionary equation for a geodesic motion on a diffeomorphism group and it is associated with Euler-Poincar\'{e} reduction via symmetry (see, e.g, \cite{H-S-C,Y}).

 The rigorous analysis of higher dimensional Camassa-Holm equations \eqref{E-P} with $d\geq1$ was initiated by Chae and Liu \cite{Chae.Liu} who obtained the local well-posedness in Hilbert spaces $m_0\in H^{s+\frac d2},\ s\geq2$ and also gave a  blow-up criterion, zero $\alpha$ limit and the Liouville type theorem. Li, Yu and Zhai \cite{L.Y.Z} proved that the solution to \eqref{E-P} with a large class of smooth initial data blows up in finite time or exists globally in time, which reveals the nonlinear depletion mechanism hidden in the Euler-Poincar\'{e} system. By means of the Littlewood-Paley theory, Yan and Yin \cite{Y.Y} established the local existence and uniqueness in Besov spaces $B^s_{p,r},\ s>\max\{\frac32,1+\frac dp\}$ and $s=1+\frac dp,\ 1\leq p\leq 2d,\ r=1$. Lately, Li and Yin \cite{Li-Yin1} proved that the corresponding solution is continuous dependence for the initial data in Besov spaces. Inspired by \cite{H-H,H-M}, Li, Dai and Zhu \cite{L.D.Z} show that the corresponding solution is not uniformly continuous dependence for the initial data in Sobolev spaces $H^s(\R^d),s>1+\frac d2$.  For more results of higher dimensional Camassa-Holm equations, we refer the reads to see \cite{Luo-Yin,Z.Y.L}.

In this paper, motivated by \cite{Li-Yu-Zhu,H-K-M},  we will show that the solution map of \eqref{E-P} is not uniformly continuous depence in Besov space $B^s_{p,r}(\R^d),s>\max\{1+\frac d2,\frac32\}$. Himonas-Misiolek \cite{H-K-M} obtained the first result on the non-uniform dependence for the CH equation in $H^s(\mathbb{T})$ with $s\geq2$ using explicitly constructed travelling wave solutions, these types of construction solutions are only suitable in Sobolev spaces. Instead, in this paper we constructed a more general initial data to obtain the non-uniform dependence in Besov spaces.

According to \cite{Y.Y}, we can transform \eqref{E-P} into the following form:
\begin{align}\label{E-P1}
\partial_tu+u\cdot \nabla u= Q(u,u)+R(u,u),
\end{align}
where
\bbal
&Q(u,v)=-(1-\De)^{-1}\D\Big(\nabla u\nabla v+\nabla u(\nabla v)^T-(\nabla u)^T\nabla v
\\&\qquad \qquad -(\mathrm{div} u)\nabla v+\frac12\mathbf{I}(\nabla u:\nabla v)\Big),
\\&R(u,v)=-(1-\De)^{-1}\Big((\mathrm{div} u)v+ u\cd \na v\Big).
\end{align*}

Then, we have the following result.

\begin{theo}\label{th2}
Let $d\geq 2, 1\leq p,r\leq \infty$ and $s>\max\{1+\frac dp,\frac32\}$. The data-to-solution map for higher dimensional Camassa-Holm equations \eqref{E-P} is not uniformly continuous from any bounded subset in $B^s_{p,r}$ into $\mathcal{C}([0,T];B^s_{p,r}(\R^d))$. That is, there exists two sequences of solutions $u^n$ and $v^n$ such that
\bbal
&||u^n_0||_{B^s_{p,r}(\R^d)}+||v^n_0||_{B^s_{p,r}(\R^d)}\lesssim 1, \quad \lim_{n\rightarrow \infty}||u^n_0-v^n_0||_{B^s_{p,r}(\R^d)}= 0,
\\&\liminf_{n\rightarrow \infty}||u^n(t)-v^n(t)||_{B^s_{p,r}(\R^d)}\gtrsim t,  \quad t\in[0,T_0],
\end{align*}
with small time $T_0$ for $T_0\leq T$.
\end{theo}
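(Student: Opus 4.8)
The plan is to establish non-uniform dependence by constructing two explicit sequences of initial data whose difference vanishes in $B^s_{p,r}$ but whose corresponding solutions separate at rate $\gtrsim t$ for small positive times. Following the high-frequency perturbation technique of Himonas-Kenig and its Besov-space refinements, I would build each datum as the sum of a low-frequency piece and a high-frequency oscillatory piece. Concretely, I would take a fixed smooth cutoff $\phi$ supported in an annulus in frequency and set, for a suitable unit direction (say $e_1=(1,0,\dots,0)$),
\bbal
u^n_0 &= 2^{-ns}\phi(x)\cos(2^n x_1)\,e_j + n^{-1}\psi(x)\,e_k, \\
v^n_0 &= 2^{-ns}\phi(x)\cos(2^n x_1)\,e_j,
\end{align*}
or a variant in which the two sequences share the high-frequency oscillation and differ only by a lower-order profile. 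The indices $j,k$ and the precise form are chosen so that the interaction term in $Q(u,u)+R(u,u)$ produces a nonvanishing contribution at the relevant frequency scale. The first task is to verify the uniform bound $\|u^n_0\|_{B^s_{p,r}}+\|v^n_0\|_{B^s_{p,r}}\lesssim 1$, which follows from the normalization $2^{-ns}$ against the $2^{ns}$ weight attached to the dyadic block at frequency $2^n$, together with $\lim_n\|u^n_0-v^n_0\|_{B^s_{p,r}}=0$, which holds because the difference lives at low frequency with amplitude $n^{-1}$ (or is constructed to be $o(1)$).

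The core of the argument is the error estimate along the flow. Since $s>\max\{1+\frac dp,\frac32\}$ lies in the local well-posedness regime established by Yan and Yin in \cite{Y.Y}, both $u^n$ and $v^n$ exist on a common time interval $[0,T]$ with uniformly bounded $B^s_{p,r}$ norms. I would then expand the solutions in time as $u^n(t)=u^n_0+t\,\partial_t u^n(0)+o(t)$ in the $B^s_{p,r}$ topology, using the equation \eqref{E-P1} to read off $\partial_t u^n(0)=-u^n_0\cdot\nabla u^n_0+Q(u^n_0,u^n_0)+R(u^n_0,u^n_0)$, and similarly for $v^n$. Subtracting, the linear-in-$t$ term of $u^n(t)-v^n(t)$ is governed by the difference of these nonlinear expressions evaluated on the data. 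The key computation is to isolate, inside $u^n_0\cdot\nabla u^n_0$ and the quadratic operators $Q,R$, the resonant self-interaction of the high-frequency factor $\cos(2^n x_1)$: a product such as $\cos(2^n x_1)\,\partial_{x_1}\cos(2^n x_1)=-2^n\sin(2^n x_1)\cos(2^n x_1)=-\tfrac{2^n}{2}\sin(2^{n+1}x_1)$ generates a term one dyadic scale higher with an amplified coefficient, whose $B^s_{p,r}$ norm survives the $2^{-ns}$ normalization and is bounded below by a positive constant independent of $n$.

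The main obstacle is controlling the remainder $o(t)$ uniformly in $n$ and showing the resonant term is genuinely nonvanishing rather than cancelling against the other quadratic contributions. For the remainder, I would rely on the Besov-space product and commutator estimates (Moser-type inequalities and the paradifferential calculus underlying \cite{Y.Y} and \cite{Li-Yin1}) to bound $\|u^n(t)-u^n_0-t\,\partial_t u^n(0)\|_{B^s_{p,r}}$ by $Ct^2$ uniformly in $n$, which forces the $t$-linear term to dominate for small $t\in[0,T_0]$. The delicate point is the cancellation check: one must verify that the specific tensorial structure of $Q$ (with its antisymmetric combination $\nabla u\nabla v+\nabla u(\nabla v)^T-(\nabla u)^T\nabla v$ and the trace term $\frac12\mathbf I(\nabla u:\nabla v)$) together with the smoothing factor $(1-\Delta)^{-1}$, does not annihilate the leading high-frequency self-interaction; choosing the polarization vectors $e_j$ and the oscillation direction $e_1$ so that at least one surviving component has a nonzero coefficient is exactly where the explicit construction must be tuned. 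Once the lower bound $\liminf_n\|u^n(t)-v^n(t)\|_{B^s_{p,r}}\gtrsim t$ is secured on $[0,T_0]$ with $T_0\le T$ small, the theorem follows by combining it with the two data-level limits.
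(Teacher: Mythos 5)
Your overall framework---two data sequences sharing a high-frequency profile and differing by a vanishing low-frequency piece, a first-order Taylor expansion of the solution in time with an $O(t^2)$ remainder controlled by Besov product estimates, and a lower bound on the $t$-linear term---is exactly the architecture of the paper's proof. But the step you single out as the ``key computation'' is wrong, and it is the step on which the whole lower bound rests. You propose to extract the separation from the resonant self-interaction $\cos(2^nx_1)\,\partial_{x_1}\cos(2^nx_1)=-\tfrac{2^n}{2}\sin(2^{n+1}x_1)$ inside $u^n_0\cdot\nabla u^n_0$. This term fails for two independent reasons. First, quantitatively: with the normalization $2^{-ns}$ the product $f_n\partial_{x_1}f_n$ has amplitude $2^{-2ns}\cdot 2^n$ and is localized at frequency $\sim 2^{n+1}$, so its $B^s_{p,r}$ norm is of order $2^{(n+1)s}\cdot 2^{-2ns+n}\sim 2^{-n(s-1)}\to 0$ (since $s>\tfrac32>1$); it does not ``survive the normalization.'' Second, structurally: since $u^n_0$ and $v^n_0$ contain the \emph{same} high-frequency factor, the self-interaction appears identically in $\partial_tu^n(0)$ and $\partial_tv^n(0)$ and cancels in the difference. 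The term that actually produces an $O(1)$ lower bound is the \emph{cross} term between the low-frequency perturbation and the gradient of the high-frequency piece: in the paper this is $g_n\cdot\nabla f_n$ with $g_n=2^{-n}\phi(x_1)\cdots\phi(x_d)$, for which $g_n\partial_{x_1}f_n\sim 2^{-n}\cdot 2^n\cdot 2^{-ns}\,\phi^2\cos(\tfrac{17}{12}2^nx_1)\cdots$ sits at frequency $\sim 2^n$ with $B^s_{p,r}$ norm $\sim 1$. Note the low-frequency amplitude must be tuned to $2^{-n}$ precisely to balance the derivative gain $2^n$; your choice $n^{-1}$ would make $\|g_n\cdot\nabla f_n\|_{B^s_{p,r}}\sim n^{-1}2^n\to\infty$, destroying the uniform $O(t^2)$ remainder estimates (which require $\|v^n_0\cdot\nabla v^n_0\|_{B^{s+1}_{p,r}}\lesssim 2^n$ and related bounds).

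Two further points. Your worry about whether the tensorial structure of $Q$ might annihilate the leading term is a red herring: $Q$ and $R$ carry the smoothing operator $(1-\Delta)^{-1}$, and the paper shows all of their contributions are $O(2^{-n\ep_s})$ in $B^s_{p,r}$; the entire $O(t)$ separation comes from the transport term $v^n_0\cdot\nabla v^n_0$ alone, so no polarization tuning of $e_j,e_k$ against $Q$ is needed (the paper simply puts everything in the first component). Also, the asymmetric roles of the two sequences matter in the paper's error analysis: $u^n$ is shown to stay within $O(2^{-n\ep_s})$ of $u^n_0$ (no $t$-linear correction is needed because $f_n\cdot\nabla f_n$ is already small), while only $v^n$ requires the first-order correction $-tv^n_0\cdot\nabla v^n_0$; your symmetric Taylor expansion of both solutions would still work, but you must then identify $g_n\cdot\nabla f_n$, not the high-frequency self-interaction, as the surviving term.
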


Our paper is organized as follows. In Section 2, we give some preliminaries which will be used in the sequel. In Section 3, we give the proof of our main theorem.\\

\noindent\textbf{Notations.} Given a Banach space $X$, we denote its norm by $\|\cdot\|_{X}$. The symbol $A\lesssim B$ means that there is a uniform positive constant $c$ independent of $A$ and $B$ such that $A\leq cB$. Here
\bbal
&(\na u^T)_{i,j}=\pa_{x_i}u^j, \quad (u\cd \na v)_i=\sum^d_{k=1}u_k\pa_{x_k}u_i, \quad (\na u\na v)_{ij}=\sum^d_{k=1}\pa_{x_i}u_k\pa_{x_k}v_j,
\\&\na u:\na v=\sum^d_{i,j=1}\pa_{x_i}u_j\pa_{x_i}v_j.
\end{align*}

\section{Littlewood-Paley analysis}

In this section, we will recall some facts about the Littlewood-Paley decomposition, the nonhomogeneous Besov spaces and their some useful properties. For more details, the readers can refer to \cite{B.C.D}.

There exists a couple of smooth functions $(\chi,\varphi)$ valued in $[0,1]$, such that $\chi$ is supported in the ball $\mathcal{B}\triangleq \{\xi\in\mathbb{R}^d:|\xi|\leq \frac 4 3\}$, and $\varphi$ is supported in the ring $\mathcal{C}\triangleq \{\xi\in\mathbb{R}^d:\frac 3 4\leq|\xi|\leq \frac 8 3\}$. Moreover,
$$\forall\,\ \xi\in\mathbb{R}^d,\,\ \chi(\xi)+{\sum\limits_{j\geq0}\varphi(2^{-j}\xi)}=1,$$
$$\forall\,\ 0\neq\xi\in\mathbb{R}^d,\,\ {\sum\limits_{j\in \Z}\varphi(2^{-j}\xi)}=1,$$
$$|j-j'|\geq 2\Rightarrow\textrm{Supp}\,\ \varphi(2^{-j}\cdot)\cap \textrm{Supp}\,\ \varphi(2^{-j'}\cdot)=\emptyset,$$
$$j\geq 1\Rightarrow\textrm{Supp}\,\ \chi(\cdot)\cap \textrm{Supp}\,\ \varphi(2^{-j}\cdot)=\emptyset.$$
Then, we can define the nonhomogeneous dyadic blocks $\Delta_j$ and nonhomogeneous low frequency cut-off operator $S_j$ as follows:
$$\Delta_j{u}= 0,\,\ if\,\ j\leq -2,\quad
\Delta_{-1}{u}= \chi(D)u=\mathcal{F}^{-1}(\chi \mathcal{F}u),$$
$$\Delta_j{u}= \varphi(2^{-j}D)u=\mathcal{F}^{-1}(\varphi(2^{-j}\cdot)\mathcal{F}u),\,\ if \,\ j\geq 0,$$
$$S_j{u}= {\sum\limits_{j'=-\infty}^{j-1}}\Delta_{j'}{u}.$$

\begin{defi}[\cite{B.C.D}]\label{de2.3}
Let $s\in\mathbb{R}$ and $1\leq p,r\leq\infty$. The nonhomogeneous Besov space $B^s_{p,r}$ consists of all tempered distribution $u$ such that
\begin{align*}
||u||_{B^s_{p,r}(\R^d)}\triangleq \Big|\Big|(2^{js}||\Delta_j{u}||_{L^p(\R^d)})_{j\in \Z}\Big|\Big|_{\ell^r(\Z)}<\infty.
\end{align*}
\end{defi}

Then, we have the following product laws.
\begin{lemm}[\cite{B.C.D}]\label{le-pro}
(1) For any $s>0$ and $1\leq p,r\leq\infty$, there exists a positive constant $C=C(d,s,p,r)$ such that
$$\|uv\|_{B^s_{p,r}(\mathbb{R}^d)}\leq C\Big(\|u\|_{L^{\infty}(\mathbb{R}^d)}\|v\|_{B^s_{p,r}(\mathbb{R}^d)}+\|v\|_{L^{\infty}(\mathbb{R}^d)}\|u\|_{B^s_{p,r}(\mathbb{R}^d)}\Big).$$
(2) Let $1\leq p,r\leq\infty$ and $s>\max\{\frac32,1+\frac{d}{p}\}$. Then, we have
$$||uv||_{B^{s-2}_{p,r}(\mathbb{R}^d)}\leq C||u||_{B^{s-1}_{p,r}(\mathbb{R}^d)}||v||_{B^{s-2}_{p,r}(\mathbb{R}^d)}.$$
\end{lemm}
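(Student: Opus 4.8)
The plan is to derive both estimates from Bony's paraproduct decomposition. I would write $uv = T_uv + T_vu + R(u,v)$, where $T_uv=\sum_j S_{j-1}u\,\Delta_j v$ is the paraproduct and $R(u,v)=\sum_j\sum_{|j-j'|\le 1}\Delta_j u\,\Delta_{j'}v$ is the remainder. The point of this splitting is that each summand of $T_uv$ has its spectrum in a dyadic annulus of size $2^j$, while each summand of $R(u,v)$ has its spectrum only in a ball of size $2^j$; this localization is exactly what lets me recover the $\ell^r$ structure after applying $\Delta_k$. The three building blocks I will use are the two paraproduct continuity properties (one with the low-frequency factor measured in $L^\infty$, one with it measured in a negative-regularity Besov norm) together with the remainder continuity property, all obtained by pairing Bernstein's inequality with Young's inequality for the convolution in the frequency index.

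For part (1), valid for every $s>0$, I would bound $T_uv$ and $R(u,v)$ by $\|u\|_{L^\infty}\|v\|_{B^s_{p,r}}$ and the symmetric term $T_vu$ by $\|v\|_{L^\infty}\|u\|_{B^s_{p,r}}$. For the paraproduct, since $\|S_{j-1}u\|_{L^\infty}\le\|u\|_{L^\infty}$ and $\Delta_k(S_{j-1}u\,\Delta_j v)=0$ unless $|j-k|\le N$, applying $\Delta_k$, taking $L^p$ norms, multiplying by $2^{ks}$ and summing in $\ell^r$ yields the bound with no restriction on $s$. For the remainder I apply $\Delta_k$ to $\sum_{j\ge k-N_0}\Delta_j u\,\widetilde\Delta_j v$, where $\widetilde\Delta_j=\Delta_{j-1}+\Delta_j+\Delta_{j+1}$, estimate $\|\Delta_j u\,\widetilde\Delta_j v\|_{L^p}\le\|u\|_{L^\infty}\|\widetilde\Delta_j v\|_{L^p}$, and reach $2^{ks}\|\Delta_k R\|_{L^p}\lesssim\|u\|_{L^\infty}\sum_{j\ge k-N_0}2^{(k-j)s}\,2^{js}\|\widetilde\Delta_j v\|_{L^p}$. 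Because $s>0$, the kernel $2^{ms}\mathbf{1}_{m\le N_0}$ is summable, so Young's inequality in $\ell^r$ closes the estimate; this is precisely where the hypothesis $s>0$ is needed.

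For part (2) I would track the three regularities separately, with $u\in B^{s-1}_{p,r}$, $v\in B^{s-2}_{p,r}$ and target $B^{s-2}_{p,r}$. The assumption $s>1+\frac dp$ gives the embedding $B^{s-1}_{p,r}\hookrightarrow L^\infty$, so $\|T_uv\|_{B^{s-2}_{p,r}}\lesssim\|u\|_{L^\infty}\|v\|_{B^{s-2}_{p,r}}\lesssim\|u\|_{B^{s-1}_{p,r}}\|v\|_{B^{s-2}_{p,r}}$. For $T_vu$ the low-frequency factor $v$ may carry negative regularity, so I bound $\|S_{j-1}v\|_{L^\infty}$ by Bernstein (gaining $2^{-j(s-2-d/p)}$) and pair it with $\|\Delta_j u\|_{L^p}\lesssim 2^{-j(s-1)}c_j$; this produces $2^{k(s-2)}\|\Delta_k T_vu\|_{L^p}\lesssim 2^{-k\alpha}$ times an $\ell^r$ sequence, where $\alpha:=s-1-\frac dp>0$, so $T_vu$ is controlled using only $s>1+\frac dp$.

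The main obstacle is the remainder $R(u,v)$. Here both factors live in $L^p$-based spaces, so a product of two dyadic pieces lies a priori only in $L^{p/2}$; converting back to $L^p$ by Bernstein costs a factor $2^{jd/p}$, and after inserting $\|\Delta_j u\|_{L^p}\lesssim 2^{-j(s-1)}c_j$ and $\|\widetilde\Delta_j v\|_{L^p}\lesssim 2^{-j(s-2)}d_j$ one is led to $2^{k(s-2)}\|\Delta_k R\|_{L^p}\lesssim\sum_{j\ge k-N_0}2^{-j(2s-3-d/p)}c_jd_j$. Convergence of this high-frequency sum, hence membership of $R(u,v)$ in $B^{s-2}_{p,r}$, hinges on the positivity of the exponent $2s-3-\frac dp$, and ensuring this positivity is the quantitative role played by the lower restrictions imposed on $s$; the genuinely delicate regime is $s<2$, where $v$ sits at negative regularity and one is forced to exploit the spectral gain coming from the higher regularity of $u$. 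Once the exponent is seen to be positive, Young's inequality for the convolution in the frequency index closes the estimate, and summing the three contributions gives the stated bound. Throughout, the case $r=\infty$ and the endpoints $p\in\{1,\infty\}$ are handled by the usual convention of replacing $\ell^r$ by $\ell^\infty$ and by applying Bernstein at the extreme integrability exponents.
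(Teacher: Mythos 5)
The paper itself gives no proof of this lemma (it is quoted from Bahouri--Chemin--Danchin and Li--Yin), so your proposal must stand on its own against the standard paraproduct argument. Part (1) is correct and is exactly the classical proof. In part (2), the bounds for $T_uv$ (via $B^{s-1}_{p,r}\hookrightarrow L^\infty$) and for $T_vu$ (Bernstein on $S_{j-1}v$, gain $2^{-j(s-1-\frac dp)}$) are also fine. The genuine gap is in the remainder. You reduce everything to the positivity of $2s-3-\frac dp$ and assert that ``the lower restrictions imposed on $s$'' guarantee it; they do not. Take $d=2$, $p=4$, $s=\frac85$: then $s>\max\{\frac32,1+\frac dp\}=\frac32$, but $2s-3-\frac dp=\frac15-\frac12<0$, so your high-frequency sum $\sum_{j\ge k-N_0}2^{-j(2s-3-d/p)}c_jd_j$ does not close, and the same failure occurs throughout the range $p>2$ with $s$ close to $\frac32$. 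The loss comes from where you apply Bernstein: bounding $\|\Delta_ju\,\widetilde\Delta_jv\|_{L^p}\lesssim 2^{jd/p}\|\Delta_ju\|_{L^p}\|\widetilde\Delta_jv\|_{L^p}$ pays the factor $2^{d/p\cdot j}$ at the \emph{input} frequency, which is the large one in the remainder regime $j\ge k-N_0$.

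The standard repair, for $p\ge2$, is to keep the product in $L^{p/2}$ by H\"older and apply Bernstein to the \emph{output} block: since $\Delta_ju\,\widetilde\Delta_jv$ has spectrum in a ball and $\Delta_k$ localizes at radius $\sim2^k$, one has $\|\Delta_k(\Delta_ju\,\widetilde\Delta_jv)\|_{L^p}\lesssim 2^{kd/p}\|\Delta_ju\|_{L^p}\|\widetilde\Delta_jv\|_{L^p}$, whence $2^{k(2s-3-\frac dp)}\|\Delta_kR(u,v)\|_{L^p}\lesssim\sum_{j\ge k-N_0}2^{(k-j)(2s-3)}c_jd_j$. Young's inequality now requires only $2s-3>0$, which is precisely the role of the hypothesis $s>\frac32$; the remaining hypothesis $s>1+\frac dp$ is what yields the embedding $B^{2s-3-\frac dp}_{p,r}\hookrightarrow B^{s-2}_{p,r}$ (equivalent to $2s-3-\frac dp\ge s-2$), not the convergence of the sum. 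For $p\le2$, where $L^{p/2}$ is unavailable, your original route does work, because then $s>1+\frac dp$ forces $2s-3-\frac dp>\frac dp-1\ge0$. So the lemma is true, but your proof as written is incomplete exactly for $p>2$, and the case split $p\ge2$ versus $p<2$ (with the two different Bernstein placements) is the missing ingredient.
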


\begin{lemm}[Theorem 3.38, \cite{B.C.D} and Lemma 2.9, \cite{Li-Yin1}]\label{lem:TDe}
Let $1\leq p,r\leq \infty$. Assume that
\begin{align}
\sigma> -d \min(\frac{1}{p}, \frac{1}{p'}) \quad \mathrm{or}\quad \sigma> -1-d \min(\frac{1}{p}, \frac{1}{p'})\quad \mathrm{if} \quad \mathrm{div\,} v=0.
\end{align}

Assume that $f_0\in B^\sigma_{p,r}$, $g\in L^1(0,T; B^\sigma_{p,r})$, and $\nabla v$ belongs to $L^1(0,T; B^{\sigma-1}_{p,r})$ if $\sigma> 1+{\frac{d}{p}}$ (or
$\sigma =1+\frac{d}{p}\ \ and \ \ r=1$) or to $L^1(0,T; B^{\frac{d}{p}}_{p,r}\bigcap L^\infty)$ otherwise. If $f\in L^\infty(0,T; B^\sigma_{p,r})\bigcap \mathcal{C}([0,T]; \mathcal{S}^{'})$ solves the following linear transport equation:

\[(T)\left\{\begin{array}{l}
\partial_t f+v\cdot\nabla f=g,\\
f|_{t=0} =f_0,
\end{array}\right.\]

then there exists a constant $C=C(d,p,r,\sigma)$ such that the following statements hold:
\begin{align}\label{ES2}
\sup_{s\in [0,t]}\|f(s)\|_{B^{\sigma}_{p,r}}\leq Ce^{CV_{p}(v,t)}\Big(\|f_0\|_{B^\sigma_{p,r}}
+\int^t_0\|g(\tau)\|_{B^{s}_{p,r}}\dd \tau\Big),
\end{align}
with
\begin{align*}
V_{p}(v,t)=
\begin{cases}
\int_0^t \|\nabla v(s)\|_{B^{\frac{d}{p}}_{p,\infty}\cap L^\infty}\dd s,&\quad\mathrm{if} \; \sigma<1+\frac{d}{p},\\
\int_0^t \|\nabla v(s)\|_{B^{\sigma}_{p,r}}\dd s,&\quad\mathrm{if} \; \sigma=1+\frac{d}{p} \mbox{ and } r>1,\\
\int_0^t \|\nabla v(s)\|_{B^{\sigma-1}_{p,r}}\dd s, &\quad \mathrm{if} \;\sigma>1+\frac{d}{p}\ \mathrm{or}\ \{\sigma=1+\frac{d}{p} \mbox{ and } r=1\}.
\end{cases}
\end{align*}
If $f=v$, then for all $\sigma>0$ ($\sigma>-1$, if $\mathrm{div\,} v=0$), the estimate \eqref{ES2} holds with
\[V_{p}(t)=\int_0^t \|\nabla v(s)\|_{L^\infty}\dd s.\]
\end{lemm}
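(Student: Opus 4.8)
The plan is to follow the classical strategy for linear transport estimates in Besov spaces — frequency localization, an $L^p$ energy estimate on each dyadic piece, a commutator estimate, and a Gr\"onwall closure. First I would apply the dyadic block $\De_j$ to the transport equation $(T)$. Setting $f_j:=\De_j f$ and commuting $\De_j$ past the transport operator $v\cd\na$ gives
\[
\pa_t f_j+v\cd\na f_j=\De_j g+R_j,\qquad R_j:=v\cd\na\De_j f-\De_j(v\cd\na f)=[v\cd\na,\De_j]f,
\]
which is a genuine transport equation for $f_j$ with source $\De_j g+R_j$.

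Next, for each fixed $j$ I would run the standard $L^p$ energy estimate for a transported quantity (valid for all $1\le p\le\infty$, the endpoint $p=\infty$ being handled along characteristics). Pairing the equation with $|f_j|^{p-2}f_j$ and integrating by parts, the transport term contributes only $\fr1p\int(\D v)|f_j|^p\,\dd x$, so that after dividing by $\|f_j\|_{L^p}^{p-1}$ one obtains
\[
\fr{\dd}{\dd t}\|f_j\|_{L^p}\le C\|\na v\|_{L^\infty}\|f_j\|_{L^p}+\|\De_j g\|_{L^p}+\|R_j\|_{L^p}.
\]
Integrating in time bounds $\|f_j(t)\|_{L^p}$ by $\|f_j(0)\|_{L^p}$ plus the time integrals of $\|\De_j g\|_{L^p}$ and $\|R_j\|_{L^p}$, modulated by $\int_0^t\|\na v\|_{L^\infty}$.

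The heart of the argument — and the step I expect to be the main obstacle — is the commutator estimate controlling $R_j$ in the $\ell^r$-weighted sense. Using Bony's decomposition of $v\cd\na f$ into the paraproducts $T_v\na f$, $T_{\na f}v$ and the remainder $R(v,\na f)$, I would split $R_j$ accordingly. The paraproduct part $[T_v,\De_j]\na f$ is handled by the classical commutator estimate (see \cite{B.C.D}), which exploits the frequency localization to recover the derivative lost to $\na$ and yields $2^{j\sigma}\|[T_v,\De_j]\na f\|_{L^p}\le C c_j\|\na v\|_{L^\infty}\|f\|_{B^\sigma_{p,r}}$ with $(c_j)$ of unit $\ell^r$ norm. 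The remaining pieces are bounded by the continuity of the paraproduct and remainder operators, and it is here that the three regimes separate: for $\sigma<1+\fr dp$ they are controlled by $\|\na v\|_{B^{d/p}_{p,\infty}\cap L^\infty}$, while for $\sigma>1+\fr dp$ (and the endpoint with $r=1$) the gain of regularity lets $\|\na v\|_{B^{\sigma-1}_{p,r}}$ suffice, with the critical case $\sigma=1+\fr dp,\ r>1$ requiring the full $\|\na v\|_{B^\sigma_{p,r}}$. The summability of the remainder $R(v,\na f)$ is exactly what forces the lower restriction $\sigma>-d\min(\fr1p,\fr1{p'})$, relaxed to $\sigma>-1-d\min(\fr1p,\fr1{p'})$ when $\D v=0$ since the divergence-free structure saves one order.

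Finally I would assemble the pieces. Multiplying the integrated $L^p$ bound by $2^{j\sigma}$, taking the $\ell^r(\Z)$ norm in $j$, and exchanging the sum with the time integral by Minkowski's inequality, the commutator estimate turns the contribution of $(R_j)_j$ into $\int_0^t\|\na v(\tau)\|_\ast\|f(\tau)\|_{B^\sigma_{p,r}}\dd\tau$, where $\|\cd\|_\ast$ is the norm on $\na v$ appropriate to the regime of $\sigma$, i.e. the integrand of $V_p(v,t)$. This produces
\[
\|f(t)\|_{B^\sigma_{p,r}}\le \|f_0\|_{B^\sigma_{p,r}}+\int_0^t\|g(\tau)\|_{B^\sigma_{p,r}}\dd\tau+C\int_0^t\|\na v(\tau)\|_\ast\|f(\tau)\|_{B^\sigma_{p,r}}\dd\tau,
\]
and Gr\"onwall's inequality gives the exponential factor $e^{CV_p(v,t)}$, hence \eqref{ES2}. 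In the special case $f=v$ the self-transport structure lets the commutator be absorbed more cheaply, so the threshold drops to $\sigma>0$ (or $\sigma>-1$ when $\D v=0$) and $V_p$ reduces to $\int_0^t\|\na v(s)\|_{L^\infty}\dd s$.
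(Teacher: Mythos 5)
Your proposal is correct and follows exactly the classical argument: the paper itself does not prove this lemma but quotes it from Theorem 3.38 of \cite{B.C.D} and Lemma 2.9 of \cite{Li-Yin1}, and those sources proceed precisely as you do — dyadic localization, the blockwise $L^p$ transport estimate, the commutator bound for $[v\cdot\nabla,\Delta_j]f$ via Bony's decomposition (with the regimes of $\sigma$ and the divergence-free relaxation arising exactly where you place them), and a Gr\"onwall closure. Nothing further is needed.
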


\section{Non-uniform continuous dependence}

In this section, motivated by \cite{Li-Yu-Zhu}, we will give the proof of our main theorem. First, we can construct a sequence initial data $u^n_0=f_n$, which can approximate to the solution $\mathbf{S}_t(u^n_0)$. Lately, we can construct a sequence initial data $v^n_0=f_n+g_n$ and can approximate to the solution $\mathbf{S}_t(v^n_0)$ by $v^n_0+tg_n\cdot\nabla f_n$. Finally, by the precious steps, we can conclude that the solution map is not uniformly continuous. In order to state our main result, we first recall the following local-in-time existence of strong solutions to \eqref{E-P} in \cite{Y.Y}:
\begin{lemm}[\cite{Y.Y}]\label{le10}
For $1\leq p,r\leq \infty$ and $s>\max\{1+\frac dp,\frac32\}$ and initial data $u_0\in B^s_{p,r}(\R^d)$, there exists a time $T=T(s,p,r,d,||u_0||_{B^s_{p,r}(\R^d)})>0$ such that the system \eqref{E-P} have a unique solution $u\in \mathcal{C}([0,T];B^s_{p,r}(\R^d))$. Moreover, for all $t\in[0,T]$, there holds
\[||u(t)||_{B^s_{p,r}(\R^d)}\leq C||u_0||_{B^s_{p,r}(\R^d)}.\]
\end{lemm}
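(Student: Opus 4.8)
The plan is to prove local well-posedness for the reformulated system \eqref{E-P1} by a Friedrichs-type iteration in the spirit of the transport theory of Bahouri--Chemin--Danchin, exploiting the smoothing structure of $Q$ and $R$. The two bilinear operators are harmless at the level of $B^s_{p,r}$: since $(1-\De)^{-1}\D$ has order $-1$ and $(1-\De)^{-1}$ has order $-2$, while their arguments contain at most two derivatives of $u$, one checks with Lemma \ref{le-pro}(1) (the Besov algebra property, available because $s-1>\frac dp$ under our hypothesis) that $\|Q(u,v)\|_{B^s_{p,r}}+\|R(u,v)\|_{B^s_{p,r}}\leq C\|u\|_{B^s_{p,r}}\|v\|_{B^s_{p,r}}$. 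Thus the forcing $g:=Q(u,u)+R(u,u)$ stays in $B^s_{p,r}$ whenever $u$ does, and the only genuine transport term is $u\cd\na u$. I would then set $u^{(0)}:=S_0u_0$ and solve inductively the linear problems
\[\pa_tu^{(n+1)}+u^{(n)}\cd\na u^{(n+1)}=Q(u^{(n)},u^{(n)})+R(u^{(n)},u^{(n)}),\qquad u^{(n+1)}|_{t=0}=S_{n+1}u_0,\]
each of which is uniquely solvable by the linear transport theory underlying Lemma \ref{lem:TDe}.

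The second step is a uniform bound. Applying \eqref{ES2} with $\sigma=s$, $f=u^{(n+1)}$, $v=u^{(n)}$, together with $\|\na u^{(n)}\|_{B^{s-1}_{p,r}}\lesssim\|u^{(n)}\|_{B^s_{p,r}}$ and the bilinear bound above, gives
\[\|u^{(n+1)}(t)\|_{B^s_{p,r}}\leq Ce^{C\int_0^t\|u^{(n)}\|_{B^s_{p,r}}\,\dd\tau}\Big(\|u_0\|_{B^s_{p,r}}+\int_0^t\|u^{(n)}\|_{B^s_{p,r}}^2\,\dd\tau\Big).\]
A standard bootstrap/continuity argument then yields a time $T=T(s,p,r,d,\|u_0\|_{B^s_{p,r}})>0$ on which $\sup_{n}\sup_{[0,T]}\|u^{(n)}\|_{B^s_{p,r}}\leq 2C\|u_0\|_{B^s_{p,r}}$. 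This uniform bound is exactly what will deliver the claimed a priori estimate at the end.

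The third step, which I expect to be the main obstacle, is convergence. Because the transport term costs one derivative, the iterates cannot be shown to be Cauchy at the top regularity; instead I would estimate $w^{(n)}:=u^{(n+1)}-u^{(n)}$ one notch below, in $B^{s-1}_{p,r}$. Subtracting consecutive equations, $w^{(n)}$ solves a transport equation with velocity $u^{(n)}$ and forcing coming from $-w^{(n-1)}\cd\na u^{(n)}$ and the bilinear differences $Q(u^{(n)},u^{(n)})-Q(u^{(n-1)},u^{(n-1)})$ and the analogue for $R$. Here the second product law Lemma \ref{le-pro}(2), namely $\|uv\|_{B^{s-2}_{p,r}}\leq C\|u\|_{B^{s-1}_{p,r}}\|v\|_{B^{s-2}_{p,r}}$, is exactly the tool that controls these differences after the order $-1$ (resp. $-2$) smoothing, giving a forcing bounded in $B^{s-1}_{p,r}$ by $C\|w^{(n-1)}\|_{B^{s-1}_{p,r}}(\|u^{(n)}\|_{B^s_{p,r}}+\|u^{(n-1)}\|_{B^s_{p,r}})$. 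Feeding this into \eqref{ES2} at level $\sigma=s-1$ (the required drift regularity is in every case controlled by $\|u^{(n)}\|_{B^s_{p,r}}$) and using the uniform bound produces, after possibly shrinking $T$, a contraction estimate making $(u^{(n)})$ a Cauchy sequence in $\mathcal{C}([0,T];B^{s-1}_{p,r})$, hence convergent to some limit $u$.

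It remains to upgrade the limit and settle uniqueness. The uniform $B^s_{p,r}$ bound together with the Fatou property of Besov spaces gives $u\in L^\infty(0,T;B^s_{p,r})$, and interpolating this with the $B^{s-1}_{p,r}$ convergence lets me pass to the limit in every term, so $u$ solves \eqref{E-P1}. Strong time-continuity $u\in\mathcal{C}([0,T];B^s_{p,r})$ then follows by applying the transport theory to $u$ itself (via the continuity-in-time assertion of Lemma \ref{lem:TDe}, supplemented for $r=\infty$ by a Bona--Smith mollification argument, where only weak continuity is otherwise immediate). Uniqueness comes from the same $B^{s-1}_{p,r}$ difference estimate applied to two solutions followed by Gr\"{o}nwall's inequality, and the a priori bound $\|u(t)\|_{B^s_{p,r}}\leq C\|u_0\|_{B^s_{p,r}}$ is read directly from the uniform bound of the second step. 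The genuine difficulty throughout is reconciling the one-derivative loss in the transport term with the top-index regularity $\mathcal{C}([0,T];B^s_{p,r})$, which is what forces the two-tier ($B^s_{p,r}$ / $B^{s-1}_{p,r}$) structure of the argument.
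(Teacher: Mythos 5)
The paper offers no proof of this lemma at all---it is quoted directly from Yan and Yin \cite{Y.Y}---and your outline reconstructs essentially the argument given in that reference: the classical transport-equation iteration in the style of Danchin, with uniform bounds at the level $B^s_{p,r}$ via Lemma \ref{lem:TDe}, a contraction one derivative lower in $B^{s-1}_{p,r}$ (where the hypothesis $s>\frac32$ enters exactly through the product law of Lemma \ref{le-pro}(2)), the Fatou property to recover the top regularity, and a Bona--Smith-type argument for time continuity. Your plan is correct as stated, up to the standard caveat (glossed over in the paper's statement as well, but handled carefully in \cite{Y.Y}) that for $r=\infty$ one only obtains weak-$*$ continuity in time with values in $B^s_{p,\infty}$ rather than membership in $\mathcal{C}([0,T];B^s_{p,\infty})$.
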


\begin{coro}\label{co1}
Let $1\leq p,r\leq \infty$ and $s>\max\{1+\frac dp,\frac32\}$. Assume that $u\in \mathcal{C}([0,T];B^s_{p,r})$ be the solution of the system \eqref{E-P}. Then, we have for all $t\in[0,T]$,
\bbal
||u(t)||_{B^{s-1}_{p,r}(\R^d)}\leq ||u_0||_{B^{s-1}_{p,r}(\R^d)}e^{C\int^t_0||u(\tau)||_{B^s_{p,r}(\R^d)}\dd \tau},
\end{align*}
and
\bbal
||u(t)||_{B^{s+1}_{p,r}(\R^d)}\leq ||u_0||_{B^{s+1}_{p,r}(\R^d)}e^{C\int^t_0||u(\tau)||_{B^s_{p,r}(\R^d)}\dd \tau}.
\end{align*}
\end{coro}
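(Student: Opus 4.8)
The plan is to read \eqref{E-P1} as a transport equation $\partial_t u+u\cd\na u=g$ with nonlinear source $g:=Q(u,u)+R(u,u)$, and to combine the transport estimate of Lemma \ref{lem:TDe} with suitable bounds on $g$. Here the advected quantity and the advecting velocity coincide ($f=v=u$), so I will invoke the final clause of Lemma \ref{lem:TDe}: for every $\sigma>0$ the estimate \eqref{ES2} holds with the loss measured only by $V_p(t)=\int_0^t\|\na u(\tau)\|_{L^\infty}\dd\tau$. Both target regularities are admissible, since $s>\frac32$ forces $\sigma=s-1>\frac12>0$ and clearly $\sigma=s+1>0$. Moreover the embedding $B^{s-1}_{p,r}\hookrightarrow L^\infty$ (valid because $s-1>\frac dp$) gives $\|\na u\|_{L^\infty}\le C\|u\|_{B^s_{p,r}}$, so the transport loss is controlled by $\int_0^t\|u(\tau)\|_{B^s_{p,r}}\dd\tau$, which is exactly the exponent demanded in the statement.

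The core of the argument is the pair of source estimates
\[
\|g\|_{B^{s-1}_{p,r}}\le C\|u\|_{B^s_{p,r}}\|u\|_{B^{s-1}_{p,r}},\qquad
\|g\|_{B^{s+1}_{p,r}}\le C\|u\|_{B^s_{p,r}}\|u\|_{B^{s+1}_{p,r}}.
\]
To prove them I will use that $(1-\De)^{-1}\D$ is smoothing of order $-1$ and $(1-\De)^{-1}$ of order $-2$, together with the two product laws of Lemma \ref{le-pro}. Since $Q(u,u)$ has an argument quadratic in $\na u$, the order-$(-1)$ gain reduces matters to $\|\na u\na u\|_{B^{s-2}_{p,r}}$ and $\|\na u\na u\|_{B^{s}_{p,r}}$: the first is handled by Lemma \ref{le-pro}(2), giving $\|u\|_{B^s}\|u\|_{B^{s-1}}$, and the second by Lemma \ref{le-pro}(1) together with $\|\na u\|_{L^\infty}\le C\|u\|_{B^s}$, giving $\|u\|_{B^s}\|u\|_{B^{s+1}}$. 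For $R(u,u)$ the order-$(-2)$ gain works cleanly at the top level, reducing $\|R(u,u)\|_{B^{s+1}_{p,r}}$ to $\|(\D u)u+u\cd\na u\|_{B^{s-1}_{p,r}}$, which Lemma \ref{le-pro}(1) bounds by $\|u\|_{B^s}\|u\|_{B^{s+1}}$.

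I expect the delicate point to be the low-regularity bound $\|R(u,u)\|_{B^{s-1}_{p,r}}$: spending both derivatives naively lands the product in $B^{s-3}_{p,r}$, where the available product laws no longer apply and the index may even be negative. My remedy is to first use the elementary embedding $\|R(u,u)\|_{B^{s-1}_{p,r}}\le\|R(u,u)\|_{B^{s}_{p,r}}$ and then spend the two derivatives only down to $B^{s-2}_{p,r}$, where Lemma \ref{le-pro}(2) applies and yields $\|u\|_{B^s}\|u\|_{B^{s-1}}$. Once the source estimates are established, I substitute them into the transport bound; since both the transport loss and the source are dominated by $\|u\|_{B^s}$ times the norm being propagated, the resulting differential inequality
\[
\frac{\dd}{\dd t}\|u(t)\|_{B^{s\pm1}_{p,r}}\le C\|u(t)\|_{B^s_{p,r}}\|u(t)\|_{B^{s\pm1}_{p,r}}
\]
closes by Gronwall's lemma and produces the two stated exponential estimates (equivalently, one Gronwalls the integrated form \eqref{ES2}). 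For the $B^{s+1}$ bound this is read as a propagation-of-regularity statement, trivially valid unless $u_0\in B^{s+1}_{p,r}$ makes the right-hand side finite.
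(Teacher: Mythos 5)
Your proof is correct and takes the same route the paper indicates for this corollary: apply the transport estimate of Lemma \ref{lem:TDe} with $f=v=u$ at the levels $\sigma=s\pm1$ and close with Gronwall. The paper omits all details, and your source-term bounds for $Q(u,u)$ and $R(u,u)$ via the smoothing of $(1-\De)^{-1}\D$ and $(1-\De)^{-1}$ together with the two product laws of Lemma \ref{le-pro} (including the embedding trick $\|R(u,u)\|_{B^{s-1}_{p,r}}\leq\|R(u,u)\|_{B^{s}_{p,r}}$ to avoid dropping to $B^{s-3}_{p,r}$) are exactly the missing ingredients needed to make that one-line proof rigorous.
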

\begin{proof}
The results can easily deduce from Lemma \ref{lem:TDe} and Gronwall's inequality. Here, we omit it.
\end{proof}

Now, we give the details of the proof to our theorem. \\
\textbf{Proof of the main theorem.} Letting $\hat{\phi}$ be a $C_0(\mathbb{R})$ such that
\begin{equation*}
\hat{\phi}(x)=
\begin{cases}
1, \quad |x|\leq \frac{1}{4^d},\\
0, \quad |x|\geq \frac{1}{2^d}.
\end{cases}
\end{equation*}
First, we choose the velocity $u^n_0$ having the following form:
\bbal
u^n_0=
\Big(f_n,0,\cdots,0\Big),
\end{align*}
with
\bbal
f_n(x)=
2^{-ns}\phi(x_1)\sin(\frac{17}{12}x_1)\phi(x_2)\cdots \phi(x_d),
\qquad n \in \Z.
\end{align*}
An easy computation gives that
\bbal
\hat{f}_n=2^{-ns-1}i\bi[\hat{\phi}
\bi(\xi_1+\frac{17}{12}2^n\bi)-\hat{\phi}\bi(\xi_1-\frac{17}{12}2^n\bi)\bi]
\hat{\phi}(\xi_2)\cdots \hat{\phi}(\xi_d),
\end{align*}
which implies
\bbal
\mathrm{supp} \ \hat{f}_n\subset \Big\{\xi\in\R^d: \ \frac{17}{12}2^n-\fr12\leq |\xi|\leq \frac{17}{12}2^n+\fr12\Big\}.
\end{align*}
Then, we deduce that
\bbal
\Delta_j(f_n)=
\begin{cases}
f_n, \quad  j=n,\\
0, \qquad j\neq n.
\end{cases}
\end{align*}
On account of Definition \ref{de2.3}, we can show that for $k\in \R$,
\bbal
||u^n_0||_{B^{s+k}_{p,r}}\leq C2^{kn}.
\end{align*}
Let $u^n$ be the solution of \eqref{E-P1} with initial data $u^n_0$. Then, we have the following estimate between $u^n_0$ and $u^n$.
\begin{lemm}\label{lemm5}
Let $\ep_s=\frac12\min\{s-1-\frac dp,s-\frac32,1\}$, then there holds
$$||u^n-u^n_0||_{L^\infty_T(B^s_{p,r})}\leq C2^{-n\ep_s},$$
where $T\simeq 1$.
\end{lemm}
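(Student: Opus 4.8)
The plan is to control $w^n := u^n - u^n_0$ by deriving a transport equation for it and running the transport estimate of Lemma~\ref{lem:TDe} at two regularity levels, $B^{s-1}_{p,r}$ and $B^s_{p,r}$, and coupling the two. Since $u^n_0$ is time independent, adding $u^n\cd\na w^n$ to the time derivative turns \eqref{E-P1} into
\[ \pa_t w^n + u^n\cd\na w^n = -\,u^n\cd\na u^n_0 + Q(u^n,u^n) + R(u^n,u^n) =: G^n, \qquad w^n|_{t=0}=0. \]
The inputs I would assemble first are the a priori bounds: $\|u^n\|_{L^\infty_T B^s_{p,r}}\lesssim 1$ from Lemma~\ref{le10}; the two-sided bounds $\|u^n\|_{B^{s-1}_{p,r}}\lesssim 2^{-n}$ and $\|u^n\|_{B^{s+1}_{p,r}}\lesssim 2^{n}$ from Corollary~\ref{co1} (feeding in $\|u^n_0\|_{B^{s+k}_{p,r}}\lesssim 2^{kn}$); the exact single-block localization $\Delta_j u^n_0 = 0$ for $j\neq n$; and the embedding $B^{s-1}_{p,r}\hookrightarrow L^\infty$ (valid since $s-1>\tfrac dp$). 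Because $V_p(u^n,t)\lesssim\int_0^t\|u^n\|_{B^s_{p,r}}\,\dd\tau\lesssim t$, the prefactor $e^{CV_p}$ in \eqref{ES2} is harmless on $[0,T]$ with $T\simeq1$, so at each level the problem reduces to bounding $\int_0^T\|G^n\|\,\dd\tau$ in the relevant norm.

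First I would estimate $w^n$ in the lower space $B^{s-1}_{p,r}$. Here $R(u^n,u^n)$ is negligible, since the two smoothing derivatives of $(1-\De)^{-1}$ together with Lemma~\ref{le-pro}(2) give a bound of order $2^{-2n}$; the term $Q(u^n,u^n)=-(1-\De)^{-1}\mathrm{div}(\cdots)$ costs one derivative and gains two, so $\|Q(u^n,u^n)\|_{B^{s-1}_{p,r}}\lesssim\|\na u^n\otimes\na u^n\|_{B^{s-2}_{p,r}}\lesssim\|u^n\|_{B^s_{p,r}}\|u^n\|_{B^{s-1}_{p,r}}\lesssim 2^{-n}$ by Lemma~\ref{le-pro}(2); and the transport term is handled by Lemma~\ref{le-pro}(1) with $\|u^n\|_{L^\infty}\lesssim\|u^n\|_{B^{s-1}_{p,r}}\lesssim2^{-n}$ and $\|\na u^n_0\|_{L^\infty}\sim2^{-n(s-1)}$. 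Gronwall's inequality then yields $\|w^n\|_{L^\infty_T B^{s-1}_{p,r}}\lesssim 2^{-n}+(\text{small})\,\|w^n\|_{L^\infty_T B^s_{p,r}}$.

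Next I would estimate $w^n$ in $B^s_{p,r}$. The delicate term is $u^n\cd\na u^n_0$, which I would split by Bony's paraproduct as $T_{u^n}\na u^n_0 + T_{\na u^n_0}u^n + \mathcal R(u^n,\na u^n_0)$. Because $u^n_0$ and $\na u^n_0$ live in the single dyadic block $j=n$, the purely $u^n_0$ contribution $T_{u^n_0}\na u^n_0$ vanishes, so $T_{u^n}\na u^n_0 = T_{w^n}\na u^n_0$, which is localized near frequency $2^n$ and bounded by $2^{n}\|w^n\|_{L^\infty}\lesssim 2^{n}\|w^n\|_{B^{s-1}_{p,r}}$; the other two pieces are directly small thanks to $\|\na u^n_0\|_{L^\infty}\sim2^{-n(s-1)}$ and their frequency support near $2^n$. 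For $Q(u^n,u^n)$ I would use $\|Q(u^n,u^n)\|_{B^s_{p,r}}\lesssim\|\na u^n\|_{L^\infty}\|\na u^n\|_{B^{s-1}_{p,r}}$ and then $\|\na u^n\|_{L^\infty}\lesssim\|u^n\|_{B^{1+\frac dp+\delta}_{p,r}}\lesssim 2^{-n(s-1-\frac dp-\delta)}$ by interpolating the bounds for $\|u^n\|_{B^{s-1}_{p,r}}$ and $\|u^n\|_{B^s_{p,r}}$; this is the source of the $s-1-\tfrac dp$ branch of $\ep_s$, while the threshold $\tfrac32$ enters through Lemma~\ref{le-pro}(2) and the available gain is capped at $1$. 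Feeding the $B^{s-1}$ bound into the $2^n\|w^n\|_{B^{s-1}_{p,r}}$ contribution and closing by Gronwall/absorption on $[0,T]$, $T\simeq1$, gives $\|w^n\|_{L^\infty_T B^s_{p,r}}\lesssim 2^{-n\ep_s}$.

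The main obstacle is exactly the transport term $u^n\cd\na u^n_0$: measured naively in $B^s_{p,r}$ its norm is only $O(1)$, so no decay is visible. Taming it needs two ingredients simultaneously — the single-frequency localization of the datum $u^n_0$, which annihilates $T_{u^n_0}\na u^n_0$, and the lower-order smallness $\|w^n\|_{B^{s-1}_{p,r}}\lesssim2^{-n}$, which converts the surviving $2^n\|w^n\|_{L^\infty}$ into a controlled quantity. Managing the feedback between the $B^{s-1}$ and $B^s$ estimates — a coupled Gronwall in which the $B^s$ norm reappears with an $O(1)$ coefficient after substitution — is the real work; the remaining terms are a routine application of the product laws and the transport estimate.
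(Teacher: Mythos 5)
Your overall architecture (transport equation for $w^n=u^n-u^n_0$, two coupled regularity levels, Bony decomposition of $u^n\cd\na u^n_0$) is much heavier than what the paper does, and as written it does not close. The problem is the step where you bound the surviving paraproduct by $2^n\|w^n\|_{L^\infty}\lesssim 2^n\|w^n\|_{B^{s-1}_{p,r}}$ and then feed in your low-level estimate $\|w^n\|_{B^{s-1}_{p,r}}\lesssim 2^{-n}$: this yields only $O(1)$, not $O(2^{-n\ep_s})$, so your $B^s_{p,r}$ bound for $w^n$ degenerates to the trivial statement $\|w^n\|_{B^s_{p,r}}\lesssim 1$. Moreover the low-level bound cannot be upgraded to $2^{-n(1+\ep_s)}$, because $Q(u^n,u^n)$ in $B^{s-1}_{p,r}$ is only $O(2^{-n})$ (via Lemma~\ref{le-pro}(2), $\|\na u^n\na u^n\|_{B^{s-2}_{p,r}}\lesssim\|u^n\|_{B^s_{p,r}}\|u^n\|_{B^{s-1}_{p,r}}$), so the coupling is stuck. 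The missing idea is precisely the $\ep_s$ of room in the Besov embedding: since $s-1-\ep_s>\frac dp$, one has $\|u^n\|_{L^\infty}\lesssim\|u^n\|_{B^{s-1-\ep_s}_{p,r}}\lesssim 2^{-n(1+\ep_s)}$ and $\|\na u^n\|_{L^\infty}\lesssim\|u^n\|_{B^{s-\ep_s}_{p,r}}\lesssim 2^{-n\ep_s}$. With these you could repair your argument by bounding $\|w^n\|_{L^\infty}\le\|u^n\|_{L^\infty}+\|u^n_0\|_{L^\infty}\lesssim 2^{-n(1+\ep_s)}$ directly instead of going through $\|w^n\|_{B^{s-1}_{p,r}}$.

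Once you have that embedding in hand, the entire apparatus is unnecessary, and this is what the paper exploits: it simply writes $\|u^n(t)-u^n_0\|_{B^s_{p,r}}\le\int_0^t\|\pa_\tau u^n\|_{B^s_{p,r}}\,\dd\tau$, bounds the right-hand side of \eqref{E-P1} by $\|u^n\|_{L^\infty}\|u^n\|_{B^{s+1}_{p,r}}+\|\na u^n\|_{L^\infty}\|u^n\|_{B^s_{p,r}}$ using Lemma~\ref{le-pro}(1), and concludes from $\|u^n\|_{L^\infty_T(B^{s+k}_{p,r})}\lesssim 2^{kn}$ that each product is $\lesssim 2^{-n(1+\ep_s)}\cdot 2^n+2^{-n\ep_s}\cdot 1=O(2^{-n\ep_s})$. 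No transport estimate for $w^n$, no Gronwall, no paraproducts. Your diagnosis that ``measured naively in $B^s_{p,r}$ the transport term is only $O(1)$'' is the root of the detour: it is true only if you use the crude bound $\|u^n\|_{L^\infty}\lesssim 2^{-n}$, and false with the sharper embedding above.
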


\begin{proof}
By the well-posedness result (see Lemma \ref{le10}), the solution $u^n$ belong to $\mathcal{C}([0,T];B^s_{p,r})$ and have lifespan $T\simeq 1$. In fact, it is easy to show that for $k>-s$,
\bbal
||u^n||_{L^\infty_T(B^{s+k}_{p,r})}\leq C2^{kn}.
\end{align*}
Since $s-\ep_s-1>\frac dp$, then we have $||f||_{L^\infty}\leq C||f||_{B^{s-1-\ep_s}_{p,r}}$. Hence, we obtain
\bbal
||u^n-u^n_0||_{B^s_{p,r}}&\leq \int^t_0||\pa_\tau u^n||_{B^s_{p,r}} \dd\tau
\\&\leq ||u^n||_{L^\infty}||u^n||_{B^{s+1}_{p,r}}
+||\na u^n||_{L^\infty}||u^n||_{B^{s}_{p,r}}
\\&\leq ||u^n||_{B^{s-1-\ep_s}_{p,r}}||u^n||_{B^{s+1}_{p,r}}
+||\na u^n||_{B^{s-1-\ep_s}_{p,r}}||u^n||_{B^{s}_{p,r}}
\\&\leq 2^{-n\ep_s}.
\end{align*}
\end{proof}

Now, we choose the velocity $v^n_0$ having the following form:
\bbal
v^n_0=
\Big(f_n+g_n,0,\cdots,0\Big),
\end{align*}
with
\bbal
g_n(x)=
2^{-n}\phi(x_1)\phi(x_2)\cdots \phi(x_d),
\qquad n \in \Z.
\end{align*}
Direct calculation shows that for $k\geq -1$,
\bal\label{v0-es}
||v^n_0||_{B^{s+k}_{p,r}}\leq C2^{kn}, \qquad ||v^n_0,\na v^n_0||_{L^\infty}\leq C(2^{-n}+2^{-(s-1)n}).
\end{align}
Let $v^n$ be the solution of \eqref{E-P1} with initial data $v^n_0$. Then, we have the following estimate between $v^n_0$ and $v^n$.
\begin{lemm}\label{lemm6}
Let $\ep_s=\frac12\min\{s-1-\frac dp,s-\frac32,1\}$, then there holds for $t\leq T$
$$||v^n(t,\cdot)-v^n_0+tv^n_0\cd\na v^n_0||_{B^s_{p,r}}\leq C2^{-n\ep_s}+Ct^2.$$
\end{lemm}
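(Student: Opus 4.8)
The plan is to control the error $w^n:=v^n-v^n_0+t\,v^n_0\cd\na v^n_0$ by deriving a transport equation for it and invoking Lemma \ref{lem:TDe}. Since $v^n_0$ is $t$-independent and \eqref{E-P1} gives $\pa_t v^n=-v^n\cd\na v^n+Q(v^n,v^n)+R(v^n,v^n)$, I would add $v^n\cd\na w^n$ to $\pa_t w^n=\pa_t v^n+v^n_0\cd\na v^n_0$ and use $v^n-v^n_0=w^n-t\,v^n_0\cd\na v^n_0$ to rewrite $(v^n_0-v^n)\cd\na v^n_0$, obtaining
\[
\pa_t w^n+v^n\cd\na w^n=G^n,\qquad w^n|_{t=0}=0,
\]
with
\[
G^n=Q(v^n,v^n)+R(v^n,v^n)-w^n\cd\na v^n_0+t\,(v^n_0\cd\na v^n_0)\cd\na v^n_0+t\,v^n\cd\na(v^n_0\cd\na v^n_0).
\]
Because $\|v^n\|_{L^\infty_TB^s_{p,r}}\lesssim1$ forces $V_p(v^n,t)\lesssim t\lesssim1$, Lemma \ref{lem:TDe} reduces the statement to bounding $\int_0^t\|G^n(\tau)\|_{B^\sigma_{p,r}}\,\dd\tau$ at the two levels $\sigma=s-1$ and $\sigma=s$.

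First I would record the bounds that make this data special: from \eqref{v0-es} and Corollary \ref{co1}, $\|v^n\|_{B^{s-1}_{p,r}}\lesssim2^{-n}$ and $\|v^n\|_{B^{s+1}_{p,r}}\lesssim2^{n}$, together with $\|v^n\|_{L^\infty}\lesssim2^{-n}$ and $\|\na v^n\|_{L^\infty}\lesssim2^{-(s-1)n}+t$ (the latter from the crude bound $\|v^n-v^n_0\|_{B^s_{p,r}}\lesssim t$, got by integrating $\pa_\tau v^n$). The quadratic remainders are then genuinely small in $B^s_{p,r}$: as $(1-\De)^{-1}\D$ and $(1-\De)^{-1}$ gain one and two derivatives, Lemma \ref{le-pro} yields $\|Q(v^n,v^n)\|_{B^s_{p,r}}\lesssim\|\na v^n\|_{L^\infty}\|v^n\|_{B^s_{p,r}}\lesssim2^{-(s-1)n}+t$ and $\|R(v^n,v^n)\|_{B^s_{p,r}}\lesssim\|v^n\|_{B^{s-1}_{p,r}}^2\lesssim2^{-2n}$, whose time integrals are of the allowed size $2^{-n\ep_s}+t^2$. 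The two explicit $t$-terms are handled by the product laws with $\|v^n\|_{L^\infty}\lesssim2^{-n}$ and $\|v^n_0\cd\na v^n_0\|_{B^{s+1}_{p,r}}\lesssim2^{n}$, giving $\|(v^n_0\cd\na v^n_0)\cd\na v^n_0\|_{B^s_{p,r}}+\|v^n\cd\na(v^n_0\cd\na v^n_0)\|_{B^s_{p,r}}\lesssim1$, so the prefactor $t$ turns them into $O(t^2)$ after integration.

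The term $w^n\cd\na v^n_0$ is the crux, and this is where I expect the main obstacle. The product law (1) bounds it by $\|w^n\|_{L^\infty}\|\na v^n_0\|_{B^s_{p,r}}+\|\na v^n_0\|_{L^\infty}\|w^n\|_{B^s_{p,r}}$; the second summand is harmless (its coefficient $\|\na v^n_0\|_{L^\infty}\lesssim2^{-(s-1)n}+2^{-n}$ is small and feeds Gronwall), but the first is dangerous since $\|\na v^n_0\|_{B^s_{p,r}}\lesssim\|v^n_0\|_{B^{s+1}_{p,r}}\lesssim2^{n}$ is large. To tame it I would run the transport estimate \emph{first} at the lower level $\sigma=s-1$, where every source term is of size $O(t\,2^{-n})$ (the smoothing gains and the factor $2^{-n}$ from $\|v^n\|_{B^{s-1}_{p,r}}$ now work in our favour, and the analogue of the dangerous term only feeds Gronwall because $\|\na v^n_0\|_{B^{s-1}_{p,r}}\lesssim1$), so Gronwall gives $\|w^n\|_{B^{s-1}_{p,r}}\lesssim t\,2^{-n}$ and hence, via $B^{s-1-\ep_s}_{p,r}\hookrightarrow L^\infty$, $\|w^n\|_{L^\infty}\lesssim t\,2^{-n}$. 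Feeding this back, the dangerous summand becomes $\|w^n\|_{L^\infty}\|\na v^n_0\|_{B^s_{p,r}}\lesssim t\,2^{-n}\cdot2^{n}=t$, with time integral $O(t^2)$. Collecting everything and applying Gronwall at level $s$ then gives $\|w^n(t)\|_{B^s_{p,r}}\lesssim2^{-n\ep_s}+t^2$, as claimed. The delicate points to watch are that the factor $2^{-n}$ in $\|w^n\|_{L^\infty}$ must come accompanied by an extra power of $t$ (otherwise the dangerous term only integrates to $O(t)$), and that the smallness of $Q$ and $R$ must be extracted from $\|\na v^n\|_{L^\infty}$ and $\|v^n\|_{B^{s-1}_{p,r}}$ rather than from the mere $O(1)$ bound $\|v^n\|_{B^s_{p,r}}\lesssim1$.
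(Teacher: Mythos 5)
Your proposal is correct and follows essentially the same route as the paper: the same error ansatz $w_n=v^n-v^n_0-tV^n_0$, the same transport equation handled by Lemma \ref{lem:TDe}, and the same two-tier bootstrap — first the $B^{s-1}_{p,r}$ level, where one gets a bound of order $t\,2^{-n}$, then the $B^{s}_{p,r}$ level, where that low-norm bound neutralizes the large factor $\|\na v^n_0\|_{B^{s}_{p,r}}\lesssim 2^{n}$ in the term $w_n\cd\na v^n_0$. The only notable (and harmless) difference is that you estimate $Q(v^n,v^n)$ wholesale via $\|\na v^n\|_{L^\infty}\lesssim 2^{-n\ep_s}+t$ rather than performing the decomposition \eqref{Q-de}, which slightly streamlines the bookkeeping.
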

\begin{proof}
By the well-posedness result (see Lemma \ref{le10}), the solution $v^n$ belong to $\mathcal{C}([0,T];B^s_{p,r})$ and have common lifespan $T\simeq 1$. It is easy to check that
\bal\label{v-(s-1)}
||v^n_0,V^n_0,v^n||_{B^{s+k}_{p,r}}\leq C2^{kn}, \quad \mathrm{for} \quad k=-1,0,1.
\end{align}
For simplicity, we denote $w_n=v^n-v^n_0-tV^n_0$ with $V^n_0=-v^n_0\cd\na v^n_0$, then we can deduce that
\bal\label{wn(1)}
\pa_tw_n+v^n\cd\na w_n=&-t(V^n_0\cd\na v^n_0+v^n_0\cd\na V^n_0)-t^2V^n_0\cd\na V^n_0\nonumber
\\&-w_n\cd\na(v^n_0+tV^n_0)+ Q(v^n,v^n)+R(v^n,v^n).
\end{align}
From Lemma \ref{le-pro}, we have
\bal\label{R(v,v)-(s-1)}
||R(v^n,v^n)||_{B^{s-1}_{p,r}}&\leq C||v_n||^2_{B^{s-1}_{p,r}}\leq C2^{-2n},
\end{align}
and
\bal\label{R(v,v)-(s)}
||R(v^n,v^n)||_{B^{s}_{p,r}}&\leq C||\na v_n||_{B^{s-1}_{p,r}}||v_n||_{B^{s-1}_{p,r}}
\leq C2^{-n}.
\end{align}
For the term $Q(v^n,v^n)$, we need decompose it as follows:
\bal\label{Q-de}
Q(v^n,v^n)=Q(w^n,v^n)+Q(v^n_0,w^n)+Q(v^n_0,tV^n_0)+Q(tV^n_0,v^n)+Q(v^n_0,v^n_0).
\end{align}
Then, combining \eqref{wn(1)} and \eqref{Q-de}, we see that
\bbal
\pa_tw_n+v^n\cd\na w_n=&\underbrace{-t(V^n_0\cd\na v^n_0+v^n_0\cd\na V^n_0+tV^n_0\cd\na V^n_0-Q(V^n_0,v^n)-Q(v^n_0,V^n_0))}_{I}
\\&\underbrace{-w_n\cd\na(v^n_0+tV^n_0)+Q(w^n,v^n)+Q(v^n_0,w^n)}_{II}
\\&+ \underbrace{Q(v^n_0,v^n_0)+R(v^n,v^n)}_{III}.
\end{align*}
By Lemma \ref{le-pro} and \eqref{v-(s-1)}, we obtain
\bal\label{I-(s-1)}
||I(t)||_{B^{s-1}_{p,r}}\leq C||v^n,v^n_0,V^n_0||_{B^{s-1}_{p,r}}||v^n,v^n_0,V^n_0||_{B^s_{p,r}} \leq  Ct2^{-n},
\end{align}
and
\bal\label{II-(s-1)}
||II(t)||_{B^{s-1}_{p,r}}\leq C||w_n||_{B^{s-1}_{p,r}}||v^n,v^n_0,V^n_0||_{B^s_{p,r}}\leq C||w_n||_{B^{s-1}_{p,r}}.
\end{align}
For the term $III(t)$, we just estimate $Q(v^n_0,v^n_0)$. It follows from Lemma \ref{le-pro} and \eqref{v0-es} that
\bbal
||Q(v^n_0,v^n_0)||_{B^{s-1}_{p,r}}&\leq C||Q(v^n_0,v^n_0)||_{B^{s-\frac12}_{p,r}}\leq C||v^n_0||_{B^{s-\frac12}_{p,r}}||v^n_0,\na v^n_0||_{L^\infty}
\\&\leq C2^{-\frac12n}\big(2^{-n(s-1)}+2^{-n}\big)\leq C2^{-n(1+\ep_s)},
\end{align*}
which along with \eqref{R(v,v)-(s-1)} implies
\bal\label{III-(s-1)}
||III(t)||_{B^{s-1}_{p,r}}\leq C2^{-n(1+\ep_s)}.
\end{align}
Thus, we infer from Lemma \ref{lem:TDe} and \eqref{I-(s-1)}-\eqref{III-(s-1)} that
\bbal
||w_n||_{B^{s-1}_{p,r}}\leq \int^t_0||w_n||_{B^{s-1}_{p,r}}\dd \tau+Ct^2+C2^{-n(1+\ep_s)},
\end{align*}
which yields
\bal\label{wn-es}
||w_n||_{B^{s-1}_{p,r}}\leq C2^{-n}t^2+C2^{-n(1+\ep_s)}.
\end{align}
Now, we can tackle with the $B^{s}_{p,r}$ norm of $w_n$. According to Lemma \ref{le-pro} and \eqref{v-(s-1)}, we have
\bal\label{I-(s)}
||I(t)||_{B^s_{p,r}}\leq C||v^n,v^n_0,V^n_0||_{B^{s-1}_{p,r}}||v^n,v^n_0,V^n_0||_{B^{s+1}_{p,r}}
+C||v^n,v^n_0,V^n_0||^2_{B^s_{p,r}}
 \leq Ct,
\end{align}
and
\bal\label{II-(s)}
||II(t)||_{B^s_{p,r}}&\leq C||w_n||_{B^{s}_{p,r}}||v^n,v^n_0,V^n_0||_{B^s_{p,r}}
+C||w_n||_{B^{s+1}_{p,r}}||v^n,v^n_0,V^n_0||_{B^{s-1}_{p,r}} \nonumber
\\&\leq C||w_n||_{B^{s}_{p,r}}+C2^{n}||w_n||_{B^{s-1}_{p,r}}.
\end{align}
From Lemma \ref{le-pro} and \eqref{v0-es}, we obtain
\bbal
||Q(v^n_0,v^n_0)||_{B^{s}_{p,r}}&\leq C||v^n_0||_{B^{s}_{p,r}}||v^n_0,\na v^n_0||_{L^\infty}
\\&\leq C||v^n_0||_{B^{s}_{p,r}}||v^n_0,\na v^n_0||_{B^{s-1-\ep_s}_{p,r}}\leq C2^{-n\ep_s},
\end{align*}
which along with \eqref{R(v,v)-(s)} leads to
\bal\label{III-(s)}
||III(t)||_{B^s_{p,r}}\leq C2^{-n\ep_s}.
\end{align}
According to Lemma \ref{lem:TDe} and \eqref{wn-es}, \eqref{I-(s)}-\eqref{III-(s)}, we have
\bbal
||w_n||_{B^{s}_{p,r}}&\leq C\int^t_0||w_n||_{B^{s}_{p,r}}\dd \tau+\int^t_0||w_n||_{B^{s-1}_{p,r}}||v^n_0,V^n_0||_{B^{s+1}_{p,r}}\dd \tau+Ct^2+C2^{-n\ep_s}
\\&\leq C\int^t_0||w_n||_{B^{s}_{p,r}}\dd \tau+Ct^2+C2^{-n\ep_s},
\end{align*}
which gives
\bbal
||w_n||_{B^{s}_{p,r}}\leq Ct^2+C2^{-n\ep_s}.
\end{align*}
\end{proof}

Finally, we prove the result of Theorem \ref{th2}. It is easy to show that
\begin{align}\label{ESend}
||u^n_0-v^n_0||_{B^s_{p,r}}\leq ||g_n||_{B^s_{p,r}}\leq C2^{-n},
\end{align}
which tend to 0 for $n$ tends to infinity. For the solution, using of triangle inequality,
\bbal
&\quad \ ||u^n-v^n||_{B^s_{p,r}}
\\&=||(u^n-u^n_0)-(v^n-v^n_0+tv^n_0\cd\na v^n_0)+u^n_0-v^n_0+tv^n_0\cd\na v^n_0||_{B^s_{p,r}}
\\&\geq ||tv^n_0\cd\na v^n_0||_{B^s_{p,r}}-||u^n_0-v^n_0||_{B^s_{p,r}}-||u^n-u^n_0||_{B^s_{p,r}}-||v^n-v^n_0+tv^n_0\cd\na v^n_0||_{B^s_{p,r}}.
\end{align*}
Thanks for  Lemma \ref{lemm5} and Lemma \ref{lemm6}, we have
\begin{align}\label{393}
||u^n-v^n||_{B^s_{p,r}}
\geq ||tv^n_0\cd\na v^n_0||_{B^s_{p,r}}-Ct^2-C2^{-n\ep_s}.
\end{align}
As $\big(v^n_0\cd\na v^n_0\big)_i=0$ for $i=2,\cdots d$, and
\begin{align}\label{397}
\big(v^n_0\cd\na v^n_0\big)_1=f_n\cd\na f_n+f_n\cd\na g_n+g_n\cd\na f_n+g_n\cd\na g_n.
\end{align}
By the definition of $f_n$ and $g_n,$ we have
\bbal
&\quad \ ||f_n\cd\na f_n,g_n\cd\na g_n,f_n\cd\na g_n||_{B^s_{p,r}}
\\&\leq C||f_n,g_n||_{L^\infty}||f_n,g_n||_{B^s_{p,r}}+C||\na f_n,g_n||_{L^\infty}||f_n,g_n||_{B^s_{p,r}}
\\&\leq C2^{-n\ep_s},
\end{align*}
which along with \eqref{393} and \eqref{397} implies
\bal\label{1000}
||u^n-v^n||_{B^s_{p,r}}\geq ct||g_n\cd\na f_n||_{B^s_{p,r}}-Ct^2-C2^{-n\ep_s}.
\end{align}
Noticing that
\bal\label{1001}
||g_n\cd\na f_n||_{B^s_{p,r}}&=2^{ns}||g_n\cd\na f_n||_{L^p}\nonumber
\\&\geq||\phi^2(x_1)\cos(\frac{17}{12}2^nx_1)||_{L^p}||\phi||^{2(d-1)}_{L^p}-C2^{-n\ep_s}.
\end{align}
Substituting \eqref{1001} into \eqref{1000}, we obtain
\bbal
||u^n-v^n||_{B^s_{p,r}}\geq ct-Ct^2-C2^{-n\ep_s}.
\end{align*}
Restricting $T_0$ to a small range,  by choosing another $c$ we have
\begin{align}\label{ESlast}
||u^n(t)-v^n(t)||_{B^s_{p,r}(\R^d)}\geq ct,  \quad t\in[0,T_0].
\end{align}
Letting $n$ go to $\infty$, then \eqref{ESlast} together with \eqref{ESend} complete the proof of Theorem \ref{th2}.

\vspace*{1em}
\noindent\textbf{Acknowledgements.}  J. Li is supported by the National Natural Science Foundation of China (Grant No.11801090).

\end{document}